\documentclass{amsart}
\title{Inductive Formulas for some Arithmetic Functions}
\usepackage{amssymb,amsmath,amsthm,epsfig,graphics,latexsym}
 \theoremstyle{definition}
 \newtheorem{definition}{Definition}
  \theoremstyle{plain}
  \newtheorem{lemma}      [definition]{Lemma}
  
  \newtheorem{theorem}    {Theorem}

  \theoremstyle{remark}

%%%%%%%%%%%%%%%%%%%%%%%%%%%%%%%%%%%%%%%%%%%%%%%%%%%%%%%%%%%%%%%%%%%%%%%%%%%%%%

\begin{document}
  \author{Mohamed El Bachraoui}
  \address{Dept. Math. Sci,
 United Arab Emirates University, PO Box 17551, Al-Ain, UAE}
 \email{melbachraoui@uaeu.ac.ae}
 %\email{msalim@uaeu.ac.ae}
 \keywords{Arithmetic functions; Divisor functions; Triangular numbers; Inductive formulas}
 \subjclass{11B75, 11B50}
 %\thanks{Corresponding author: M. El Bachraoui, E-mail: melbachraoui@uaeu.ac.ae}
  %Department of Mathematical Sciences\\
  %United Arab Emirates University,
  %PO Box 17551, Al-Ain, UAE \\
  %\texttt{melbachraoui@uaeu.ac.ae, \ msalim@uaeu.ac.ae}}
  %
  %\date{\textit{\today}}
  %\maketitle
  \begin{abstract}
  We prove recursive formulas involving sums of divisors and sums of triangular numbers
   and give a variety of identities
  relating arithmetic functions to divisor functions providing inductive identities
  for such arithmetic functions.
  \end{abstract}
  \date{\textit{\today}}
  \maketitle
 \section{Introduction}
 In this note we give a natural extension of a theorem given by Apostol in his book \cite{Apostol}
 on analytic number theory and we use
 it to deduce a variety of formulas involving sums of divisors functions.
 Let $\mathbb{N}=\{1,2,3,\ldots\}$ and $\mathbb{N}_0 = \{0,1,2,\ldots\}$.
 \begin{theorem}\label{one-product} \cite[Theorem 14.8]{Apostol}
 Let $A \subseteq {\mathbb{N}}$ and let
 $f : A \to \mathbb{C}$ be an arithmetic function such that both
 \[
  F_{A} (x) =
  \prod_{n \in A}(1- x^{n})^{-\frac{f(n)}{n}} =
  \sum_{n=0}^{\infty} p_{A,f}(n) x^n
 \]
 and
 \[
 G_{A} (x) =
  \sum_{n \in A} \frac{f(n)}{n} x^{n}
 \]
 converge absolutely and represent analytic functions in the unit disk $|x|<1$.
 Then
 \[
 n p_{A,f}(n) =
 \sum_{k=1}^n \left(p_{A,f}(n-k) f_{A}(k)\right),
 \]
 where
  $p_{A,f}(0)=1$ and
  \[ f_{A}(k) = \sum_{\substack{d|k \\ d\in A}}f(d). \]
 \end{theorem}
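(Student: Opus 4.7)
The plan is to prove the identity by logarithmic differentiation of the product $F_A(x)$, which is the standard workhorse for turning multiplicative identities for generating functions into additive (convolution) recursions on their coefficients. I would first take $\log F_A(x) = -\sum_{n\in A}\frac{f(n)}{n}\log(1-x^n)$, which is legitimate on $|x|<1$ by the absolute convergence hypothesis and the fact that $F_A$ is a nonvanishing analytic function (each factor is nonzero on the unit disk).

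Next I would differentiate term-by-term and multiply by $x$ to obtain
\[
\frac{x F_A'(x)}{F_A(x)} \;=\; \sum_{n\in A} f(n)\,\frac{x^n}{1-x^n}.
\]
Then I would expand each $\frac{x^n}{1-x^n}=\sum_{m\ge 1} x^{mn}$, rearrange the double sum by collecting powers $x^k$ with $k=mn$, and recognize the inner coefficient as $f_A(k)=\sum_{d\mid k,\, d\in A} f(d)$. This gives the clean identity
\[
x F_A'(x) \;=\; F_A(x)\,\sum_{k=1}^{\infty} f_A(k)\,x^k.
\]

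Finally, I would substitute the power series $F_A(x)=\sum_{n\ge 0} p_{A,f}(n)x^n$ on both sides: the left-hand side becomes $\sum_{n\ge 1} n\,p_{A,f}(n)\,x^n$, while the right-hand side becomes a Cauchy product whose $x^n$ coefficient is $\sum_{k=1}^{n} p_{A,f}(n-k)\,f_A(k)$. Matching coefficients yields the desired recursion.

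The main obstacle, and the only real subtlety, is the justification for interchanging the order of summation when passing from $\sum_{n\in A} f(n)\sum_{m\ge 1} x^{mn}$ to $\sum_{k\ge 1} f_A(k)x^k$, and for term-by-term differentiation of the logarithm. Both are handled by the absolute convergence of $G_A(x)$ and $F_A(x)$ on the open unit disk assumed in the hypothesis, combined with standard Weierstrass-type arguments on compact subdisks. Once that bookkeeping is in place, the rest is routine manipulation of formal power series.
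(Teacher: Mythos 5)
Your proposal is correct and follows exactly the logarithmic-differentiation argument that the paper uses (for its generalization, Theorem \ref{main1}, which is Apostol's original method for this statement): take $\log F_A$, differentiate, multiply by $x$, expand the geometric series to obtain $\sum_k f_A(k)x^k$, and match coefficients in the resulting Cauchy product. No essential difference from the paper's route, and your remarks on justifying the rearrangements via absolute convergence are the right (and only) technical point.
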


 \noindent
 A direct consequence of Theorem \ref{one-product} states that
 \[ n p(n) = \sum_{k=1}^n \sigma(k) p (n-k), \]
 where $p(n)$ is the partition function and $\sigma(n)$ is the sum of divisors function, see Apostol \cite{Apostol}.
 An important argument to deduce the last identity is the fact that the generating function for $p(n)$ is
 \[ \prod_{n=1}^{\infty} (1-x^n)^{-1}. \]
 Theorem \ref{one-product} has also been used by Robbins in \cite{Robbins} to give formulas relating different
 arithmetic functions.
 However, for many arithmetic functions having generating functions of the form
 \[
 \prod_{i=1}^{\alpha} \prod_{n \in A_i}(1- x^{n})^{-\frac{f_i(n)}{n}}
 \]
 with multiple infinite products involved,
 this theorem does not apply without appeal to the generating functions
 of the individual infinite products.
  The main motivation of this work is to deal with such arithmetic functions in a direct way.

  \noindent

  We will need the following three identities due to Jacobi, Gauss, and Ramanujan respectively:
 \begin{equation} \label{Jacobi}
  \prod_{n=1}^{\infty}(1-x^{2n})(1-x^{2n-1})^2 = 1 + 2 \sum_{n=1}^{\infty}(-1)^n x^{n^2}.
  \end{equation}
  \begin{equation} \label{Gauss}
  \prod_{n=1}^{\infty}(1-x^{2n})(1-x^{2n-1})^{-1} = \sum_{n=0}^{\infty} x^{\frac{n(n+1)}{2}}.
  \end{equation}
  \begin{equation} \label{Ramanujan-identity}
  x \left( \sum_{n=0}^{\infty} x^{\frac{n(n+1)}{2}} \right)^8 =
  x \prod_{n=1}^{\infty}(1-x^{2n})^8 (1-x^{2n-1})^{-8} =
   \sum_{n=1}^{\infty}\frac{n^3 x^{n}}{1-x^{2n}}.
  \end{equation}
  Identities (\ref{Jacobi}) and (\ref{Gauss}) can be found in Hardy and Wright \cite{Hardy-Wright} and
  identity (\ref{Ramanujan-identity}) can be found in Ramanujan \cite{Ramanujan} with a proof in Ewell \cite{Ewell}.
  Note that the infinite sum
  \[
  \left(\sum_{n=0}^{\infty} x^{\frac{n(n+1)}{2}} \right)^8
  \]
  which appears in formula (\ref{Ramanujan-identity}) counts the number of representations of a positive integer
  as the sum of eight triangular numbers, see Definition \ref{triangular} below for triangular numbers.
  For an account on representations
  as sums of triangular numbers see for instance Ono et al \cite{Ono-Robins-Wahl}.
   We shall also require the following two identities due to Rogers and Ramanujan, see reference \cite{Hardy-Wright}.
  \begin{equation}\label{Rogers-Ramanujan}
  \begin{split}
  \sum_{n=0}^{\infty} R_1(n) x^n &= \prod_{n=1}^{\infty}\left((1-x^{5n-1}) (1-x^{5n-4})\right)^{-1} =
  1 + \sum_{n=1}^{\infty} \frac{x^{n^2}}{\prod_{j=1}^n(1-x^j)}. \\
  \sum_{n=0}^{\infty} R_2(n) x^n &= \prod_{n=1}^{\infty} \left((1-x^{5n-2})(1-x^{5n-3}) \right)^{-1} =
  1 + \sum_{n=1}^{\infty} \frac{x^{n(n+1)}}{\prod_{j=1}^n(1-x^j)}.
  \end{split}
  \end{equation}
 \section{Main Result}
 \begin{theorem}\label{main1}
 Let $\alpha \in \mathbb{N}$. Let $A_1, A_2, \ldots A_{\alpha} \subseteq {\mathbb{N}}$ and
 $\underline{A}=(A_1,A_2,\ldots,A_{\alpha})$ and let
 $f_i : A_i \to \mathbb{C}$ for $i=1,2,\ldots,\alpha$ be arithmetic functions and let
 $\underline{f}=(f_1,f_2,\ldots,f_{\alpha})$.
 If both
 \[
  F_{\underline{A}} (x) =
  \prod_{i=1}^{\alpha} \prod_{n \in A_i}(1- x^{n})^{-\frac{f_i(n)}{n}} =
  \sum_{n=0}^{\infty} p_{\underline{A},\underline{f}}(n) x^n
 \]
 and
 \[
  \sum_{i=1}^{\alpha} \sum_{n \in A_i} \frac{f_i(n)}{n} x^{n}
 \]
 converge absolutely and represent analytic functions in the unit disk $|x|<1$,
 then
 \begin{equation*}\label{main-formula}
 n p_{\underline{A},\underline{f}}(n) =
 \sum_{k=1}^n \left(p_{\underline{A},\underline{f}}(n-k) \sum_{i=1}^{\alpha} f_{i,A_i}(k)\right),
 \end{equation*}
 where
  $p_{\underline{A},\underline{f}}(0)=1$ and
  \[ f_{i,A_i}(k) = \sum_{\substack{d \mid k \\ d\in A_i}}f_i(d). \]
 \end{theorem}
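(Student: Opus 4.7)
The plan is to imitate Apostol's proof of Theorem~\ref{one-product}, exploiting the fact that taking the logarithm converts the product of products into a single double sum to which the same manipulation applies uniformly in the extra index $i$. The key computation is the logarithmic derivative of $F_{\underline{A}}(x)$, after which the identity follows by a standard Lambert-series rearrangement and coefficient matching via the Cauchy product.

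Concretely, I would start from $-\log(1-x^n)=\sum_{m=1}^{\infty}x^{nm}/m$ to obtain
\[
\log F_{\underline{A}}(x) = \sum_{i=1}^{\alpha}\sum_{n\in A_i}\frac{f_i(n)}{n}\sum_{m=1}^{\infty}\frac{x^{nm}}{m}.
\]
Differentiating term-by-term (justified on compact subsets of $|x|<1$ by the assumed absolute convergence) and multiplying through by $x$ gives
\[
\frac{xF_{\underline{A}}'(x)}{F_{\underline{A}}(x)} = \sum_{i=1}^{\alpha}\sum_{n\in A_i} f_i(n)\,\frac{x^n}{1-x^n}.
\]
Expanding $x^n/(1-x^n)=\sum_{m=1}^{\infty} x^{nm}$ and collecting the coefficient of $x^k$, so that $n$ ranges over the divisors of $k$ lying in $A_i$, converts the right-hand side into $\sum_{k=1}^{\infty}\bigl(\sum_{i=1}^{\alpha} f_{i,A_i}(k)\bigr) x^k$.

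The proof is then completed by rewriting the identity as
\[
xF_{\underline{A}}'(x) = F_{\underline{A}}(x)\cdot \sum_{k=1}^{\infty}\Bigl(\sum_{i=1}^{\alpha} f_{i,A_i}(k)\Bigr)x^k,
\]
substituting the power series $F_{\underline{A}}(x)=\sum_{n=0}^{\infty} p_{\underline{A},\underline{f}}(n)x^n$ on the right and $xF_{\underline{A}}'(x)=\sum_{n=1}^{\infty} n p_{\underline{A},\underline{f}}(n)x^n$ on the left, and equating coefficients of $x^n$ via the Cauchy product.

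The only point requiring care is the bookkeeping: each interchange of summation in the Lambert-type rearrangement and each term-by-term differentiation must be legitimated, but all follow routinely from the hypothesis that the defining series converge absolutely on $|x|<1$. In effect, the single-product argument behind Theorem~\ref{one-product} transfers verbatim, with the index $i$ carried along as a passive parameter; so rather than a genuine obstacle, this is merely a verification that no step of Apostol's proof uses the fact that $\alpha=1$.
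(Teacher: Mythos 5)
Your proposal is correct and follows essentially the same route as the paper: take $\log F_{\underline{A}}$, differentiate, multiply by $x$, rearrange the resulting Lambert-type double sum into $\sum_{k\geq 1}\bigl(\sum_{i=1}^{\alpha} f_{i,A_i}(k)\bigr)x^k$, and equate coefficients in $xF_{\underline{A}}'(x)=F_{\underline{A}}(x)\sum_{k\geq 1}\bigl(\sum_i f_{i,A_i}(k)\bigr)x^k$. The only cosmetic difference is that you pass through the explicit form $x^n/(1-x^n)$ before expanding, whereas the paper expands the geometric series directly; the substance is identical.
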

 \begin{proof}
 %For simplicity let $F= F_{\underline{A}}$.
 We have
 \[
 \begin{split}
 \log F_{\underline{A}}(x) &=
 \sum_{i=1}^{\alpha} - \sum_{n\in A_i} \frac{f_i (n)}{n} \log(1-x^n) \\
 &=
 \sum_{i=1}^{\alpha} \sum_{n\in A_i} \frac{f_i (n)}{n}\sum_{m=1}^{\infty}\frac{x^{mn}}{m}.
 \end{split}
 \]
 Differentiating and multiplying by $x$ gives
 \[
 %\begin{split}
 x \frac{F_{\underline{A}}'(x)}{F_{\underline{A}}(x)} =
 \sum_{m=1}^{\infty} \sum_{i=1}^{\alpha} \sum_{n\in A_i} f_i(n)x^{m n}
 =
 \sum_{i=1}^{\alpha} \sum_{k=1}^{\infty} f_{i,A_i}(k) x^k.
 %\end{split}
 \]
 Then
 \[
 x F_{\underline{A}}'(x) = F_{\underline{A}}(x) ( \sum_{i=1}^{\alpha} \sum_{k=1}^{\infty} f_{i,A_i}(k) x^k ), \]
 and as
 \[
 x F_{\underline{A}}'(x) = \sum_{n=1}^{\infty} n p_{\underline{A},\underline{f}}(n) x^n,
 \]
 the result follows by matching coefficients.
 \end{proof}
 \section{Applications to divisor functions}
 \noindent
 %Throughout let $n$ be a nonnegative integer.
 \begin{definition}
 Let $q \in \mathbb{Q}$ and let 
 \[
 \sigma(q) = \begin{cases}
 \sum_{d\mid q}d\ \text{if\ } q \in \mathbb{N}, \\
 1\quad \text{if\ } q=0, \\
 0\quad \text{if\ } q\in \mathbb{Q}\setminus \mathbb{N}_0.
 \end{cases}
 \]
 \end{definition}
 \begin{definition}
 Let $n, r\in \mathbb{N}_0$, let $m \in \mathbb{N}$, and let
 \[
 \sigma_{r,m}(n) = \sum_{\substack{d\mid n\\ d\equiv r\bmod m}} d
 \]
 If $m=2$ and $r=1$ we shall write $\sigma^{o}(n)$ rather than $\sigma_{1,2}(n)$ and
 if $m=2$ and $r=0$ we shall write $\sigma^{E}(n)$ rather than $\sigma_{0,2}(n)$.
 \end{definition}
  \begin{definition} \label{square}
 Let the function $s$ be defined on $\mathbb{N}_0$ by
 \[
 s(n) = \begin{cases}
 1\quad \text{if\ } n=m^2, \\
 0\quad \text{otherwise.}
 \end{cases}
 \]
 \end{definition}
 \begin{theorem} \label{sigma-inductive}
 If $n\in \mathbb{N}$, then
 \[
  (-1)^n s(n) n = \frac{-\sigma (n)- \sigma^{o}(n)}{2} + \sum_{k\geq 1} (-1)^{k+1} (\sigma (n-k^2)+ \sigma^{o}(n-k^2)).
 \]
 \end{theorem}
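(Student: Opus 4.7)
The plan is to apply Theorem \ref{main1} to the infinite product on the left-hand side of Jacobi's identity (\ref{Jacobi}), which fits the hypotheses of the theorem with $\alpha = 2$. I would take $A_1 = \{2,4,6,\ldots\}$ with $f_1(n)=-n$ (so the exponent $-f_1(n)/n$ equals $1$ and reproduces the factors $(1-x^{2n})$), and $A_2 = \{1,3,5,\ldots\}$ with $f_2(n)=-2n$ (so the exponent $-f_2(n)/n$ equals $2$ and reproduces the factors $(1-x^{2n-1})^2$).

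First I would identify the coefficients $p_{\underline{A},\underline{f}}(n)$. By (\ref{Jacobi}) the generating function equals $1+2\sum_{m\ge1}(-1)^m x^{m^2}$, so $p_{\underline{A},\underline{f}}(0)=1$ and, for $n\ge 1$, the coefficient vanishes unless $n$ is a perfect square $m^2$, in which case it equals $2(-1)^m$. Using the parity identity $m^2\equiv m\pmod 2$, this can be written uniformly as $p_{\underline{A},\underline{f}}(n)=2(-1)^n s(n)$ for $n\ge 1$.

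Next I would evaluate the inner divisor sum that appears in Theorem \ref{main1}. A direct computation gives
\[
f_{1,A_1}(k)=\sum_{\substack{d\mid k\\ d\ \mathrm{even}}}(-d)=-\sigma^{E}(k),\qquad f_{2,A_2}(k)=-2\sigma^{o}(k),
\]
whose sum simplifies to $-(\sigma^{E}(k)+\sigma^{o}(k))-\sigma^{o}(k)=-\bigl(\sigma(k)+\sigma^{o}(k)\bigr)$. Substituting everything into the recursion provided by Theorem \ref{main1} yields
\[
2n(-1)^n s(n)=-\sum_{k=1}^{n}p_{\underline{A},\underline{f}}(n-k)\bigl(\sigma(k)+\sigma^{o}(k)\bigr).
\]
To finish, I would reindex by $j=n-k$, isolate the $j=0$ contribution (which gives $-(\sigma(n)+\sigma^{o}(n))$ thanks to $p_{\underline{A},\underline{f}}(0)=1$), and note that all other $j$ contribute zero unless $j=r^2$ with $r\ge 1$, where $p_{\underline{A},\underline{f}}(r^2)=2(-1)^r$. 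After dividing through by $2$ the claimed identity drops out.

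The proof is essentially mechanical once the main theorem is in hand; the only subtle step is the sign-matching observation $(-1)^{\sqrt n}=(-1)^n$ on perfect squares, which is what lets the sign on the coefficient side of Jacobi's identity be rewritten in the form $(-1)^n$ demanded by the statement. Book-keeping of the truncation of the sum (that is, verifying that $\sigma$ and $\sigma^{o}$ evaluated at nonpositive or non-integer arguments kill the unwanted tail) is the only other place where care is needed.
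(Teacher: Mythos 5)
Your proposal is correct and follows essentially the same route as the paper: the identical choice $A_1=$ evens with $f_1(n)=-n$, $A_2=$ odds with $f_2(n)=-2n$, Jacobi's identity (\ref{Jacobi}) to read off $p_{\underline{A},\underline{f}}(n)=2(-1)^n s(n)$ (with the same parity observation $(-1)^{\sqrt{n}}=(-1)^n$ on squares), the same computation $f_{1,A_1}(k)+f_{2,A_2}(k)=-\sigma(k)-\sigma^{o}(k)$, and the same application of Theorem \ref{main1} followed by reindexing and dividing by $2$. Nothing further is needed.
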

 \begin{proof}
 Let $A_1$ be the set of even nonnegative integers, let $A_2$ be the set of odd nonnegative integers, and let
 $f_1(n)=-n$ and $f_2(n)=-2n$ be defined on $A_1$ and $A_2$ respectively. Then
 \[
  F_{\underline{A}}(x) = \prod_{n \in A_1} (1-x^n) \prod_{n\in A_2} (1-x^n)^2 =
  \prod_{n=1}^{\infty} (1-x^{2n})(1- x^{2n-1})^2.
  \]
 So
 by identity (\ref{Jacobi}) we have
 \[
  F_{\underline{A}}(x) = 1 + \sum_{n=1}^{\infty}p_{\underline{A},\underline{f}}(n) x^n =
   1 + 2 \sum_{n=1}^{\infty} (-1)^n x^{n^2},
 \]
 and therefore for $n\in \mathbb{N}$
 \[
 p_{\underline{A},\underline{f}}(n) = 2 (-1)^n s(n).
 \]
 Moreover, we have
 \[ f_{1,A_1}(k)= - \sigma^{E} (k),\  f_{2,A_2}(k)= -2 \sigma^{o} (k),\ \text{and so\ }
 f_{1,A_1}(k) + f_{2,A_2}(k)= - \sigma(k) - \sigma^{o}(k). \]
 Putting in Theorem \ref{main1} we find
 \[
 -(\sigma(n) + \sigma^{o} (n)) + \sum_{k=1}^{n-1} 2 (-1)^k \left( - \sigma (n-k^2) - \sigma^{o}(n-k^2) \right) =
 2 n (-1)^n s(n),
 \]
 and the result follows.
 \end{proof}
 We note that Theorem \ref{sigma-inductive} has been given in Liouville \cite{Liouville} and it is provable
  by Liouville's elementary methods, see Williams \cite[Theorem 6.2]{Williams}.
 \begin{definition} \label{triangular}
 Let $n\in\mathbb{N}_0$ and let $T(n)= \frac{n(n+1)}{2}$. The number is
 called \emph{triangular} if $n=T(m)$ for some $m \in \mathbb{N}_0$. Further
 let the function $t$ be defined on $\mathbb{N}_0$ as follows:
 \[
 t(n) = \begin{cases}
 1\quad \text{if\ } n = T(m),\\
 0\quad \text{otherwise.}
 \end{cases}
 \]
 \end{definition}
 \begin{theorem}
 We have
 \[
  t(n)n = \sum_{k\geq 0} \sigma^{o}(n- T(k)) - \sigma^{E}(n-T(k)).
 \]
 \end{theorem}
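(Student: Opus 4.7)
The plan is to mimic the approach of Theorem \ref{sigma-inductive}, but replace Jacobi's identity (\ref{Jacobi}) with Gauss's identity (\ref{Gauss}), and choose the exponents so that the resulting Euler-type product is the generating function for triangular-number indicators.

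Concretely, I take $\alpha=2$ with $A_1$ the even positive integers, $A_2$ the odd positive integers, and set $f_1(n)=-n$ on $A_1$ and $f_2(n)=n$ on $A_2$. Then
\[
 F_{\underline{A}}(x)
 = \prod_{n\in A_1}(1-x^{n})\,\prod_{n\in A_2}(1-x^{n})^{-1}
 = \prod_{n=1}^{\infty}(1-x^{2n})(1-x^{2n-1})^{-1},
\]
which by Gauss's identity (\ref{Gauss}) equals $\sum_{n=0}^{\infty} x^{T(n)} = \sum_{n=0}^{\infty} t(n)\,x^{n}$. Hence $p_{\underline{A},\underline{f}}(n)=t(n)$ for all $n\ge 0$.

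Next I compute the twisted divisor sums: since $f_1$ picks up only the even divisors of $k$ with a minus sign and $f_2$ picks up only the odd divisors of $k$ with a plus sign,
\[
 f_{1,A_1}(k) = -\sigma^{E}(k),\qquad f_{2,A_2}(k) = \sigma^{o}(k),
\]
so that $f_{1,A_1}(k)+f_{2,A_2}(k)=\sigma^{o}(k)-\sigma^{E}(k)$. Feeding this into Theorem \ref{main1} yields
\[
 n\,t(n) \;=\; \sum_{k=1}^{n} t(n-k)\bigl(\sigma^{o}(k)-\sigma^{E}(k)\bigr).
\]

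The final step is the change of summation variable $j=n-k$: since $t(n-k)$ is nonzero exactly when $n-k=T(m)$ for some $m\ge 0$, the sum collapses to a sum over those $m$ with $T(m)\le n-1$, giving
\[
 n\,t(n) \;=\; \sum_{\substack{m\ge 0\\ T(m)\le n-1}} \bigl(\sigma^{o}(n-T(m)) - \sigma^{E}(n-T(m))\bigr),
\]
which is the stated formula once one adopts the natural convention that $\sigma^{o}$ and $\sigma^{E}$ vanish on negative arguments and agree at $0$, so that all other values of $k$ in the displayed sum over $k\ge 0$ contribute nothing. The only nonroutine point is this bookkeeping at the boundary $T(k)=n$, which is the same convention issue implicit in Theorem \ref{sigma-inductive}; once handled, everything else is a direct specialisation of Theorem \ref{main1}.
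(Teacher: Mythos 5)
Your proposal is correct and is essentially the paper's own proof: the same choice of $A_1,A_2$ with $f_1(n)=-n$, $f_2(n)=n$, Gauss's identity (\ref{Gauss}) to identify $p_{\underline{A},\underline{f}}(n)=t(n)$, the divisor sums $-\sigma^{E}(k)$ and $\sigma^{o}(k)$, and Theorem \ref{main1} followed by reindexing over triangular numbers. Your explicit remark about the boundary convention at $T(k)=n$ is a minor point the paper leaves implicit, but it changes nothing of substance.
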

 \begin{proof}
 Let $A_1$ be the set of even nonnegative integers and $f_1(n)=-n$ and let
 $A_2$ be the set of odd nonnegative integers and $f_2(n)=n$. Then
 by identity (\ref{Gauss})
 \[
 F_{\underline{A}}(x)= \prod_{n\in A_1} (1-x^n) \prod_{n\in A_2} (1-x^n)^{-1}
 =
 \prod_{n=1}^{\infty} (1-x^{2n}) (1- x^{2n-1})^{-1} = \sum_{n=0}^{\infty} x^{T(n)}.
 \]
 Writing $F_{\underline{A}}(x)= \sum_{n=0}^{\infty} p_{\underline{A},\underline{f}}(n) x^n$, we have
 \[
 p_{\underline{A},\underline{f}}(n) = t(n).
 \]
 Moreover, we have
 $f_{1,A_1}(k) = -\sigma^E(k)$ and $f_{2,A_2}(k) = \sigma^{o}(k)$. Then by Theorem \ref{main1}
 we find
 \[
 n p_{\underline{A},\underline{f}}(n) =
 \sum_{k=0}^{n-1} p_{\underline{A},\underline{f}}(k)\left( \sigma^{o}(n-k) - \sigma^{E}(n-k) \right)
 \]
 and thus
 \[
 \sum_{k=0}^{n-1} \left( \sigma^{o}(n-T(k)) - \sigma^{E}(n-T(k)) \right)
 = t(n) n.
 \]
 \end{proof}
 \section{Arithmetic functions connected to divisor functions}
 \begin{definition}
 Let $a(0) =1$ and
  \[
  a(n)= \sum_{\substack{d|n \\ d\equiv 1 \bmod 2}} \left( \frac{n}{d} \right)^3.
 \]
 \end{definition}
 \begin{lemma} \label{cubic-divisors}
 We have
 \[
 x \prod_{n=1}^{\infty} (1-x^{2n})^8 (1- x^{2n-1})^{-8} = \sum_{n=0}^{\infty} a(n) x^n.
 \]
 \end{lemma}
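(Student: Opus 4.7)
The plan is to reduce the lemma directly to Ramanujan's identity \eqref{Ramanujan-identity}, which already provides
\[
x\prod_{n=1}^{\infty}(1-x^{2n})^{8}(1-x^{2n-1})^{-8} = \sum_{n=1}^{\infty}\frac{n^{3}x^{n}}{1-x^{2n}}.
\]
So the entire work becomes showing that the right hand side is the generating function of the arithmetic function $a$; no appeal to Theorem \ref{main1} or to analytic machinery beyond what Ramanujan's identity packages is needed.

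Next I would expand each factor as a geometric series, writing $(1-x^{2n})^{-1}=\sum_{m=0}^{\infty}x^{2mn}$, which converts the single sum into a double sum
\[
\sum_{n=1}^{\infty}\frac{n^{3}x^{n}}{1-x^{2n}} = \sum_{n=1}^{\infty}\sum_{m=0}^{\infty} n^{3}\, x^{n(2m+1)}.
\]
The key combinatorial step is then to reindex by the exponent $N=n(2m+1)$. For each fixed $N\in\mathbb{N}$, the pairs $(n,m)\in\mathbb{N}\times\mathbb{N}_{0}$ with $n(2m+1)=N$ correspond bijectively to the factorizations $N=(N/d)\cdot d$ in which $d=2m+1$ runs over the odd divisors of $N$ and $n=N/d$. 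The contribution of each such pair is $n^{3}=(N/d)^{3}$, so collecting gives
\[
[x^{N}]\sum_{n=1}^{\infty}\frac{n^{3}x^{n}}{1-x^{2n}} = \sum_{\substack{d\mid N\\ d\equiv 1\bmod 2}}\!\left(\frac{N}{d}\right)^{3} = a(N),
\]
which matches the claimed generating function (with $a(0)$ accounting for the constant term, noting that the left hand side is divisible by $x$).

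The main obstacle, such as it is, is the bookkeeping in the reindexing: one must verify that the double sum converges absolutely on $|x|<1$ so that the swap and regrouping by $N$ are legitimate. This is immediate from absolute convergence of the original infinite product on $|x|<1$ (guaranteed by \eqref{Ramanujan-identity}), which dominates term-by-term the positive series $\sum n^{3}|x|^{n(2m+1)}$. Once absolute convergence is in hand, the rearrangement and the identification of coefficients with $a(N)$ are purely formal.
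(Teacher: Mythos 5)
Your proof is correct and follows the same route as the paper: both reduce the lemma to Ramanujan's identity \eqref{Ramanujan-identity} and then identify the Lambert series $\sum_{n\geq 1} n^{3}x^{n}/(1-x^{2n})$ with $\sum a(n)x^{n}$. The only difference is that the paper dismisses this last identification with ``it can be verified that,'' whereas you carry out the geometric-series expansion and the reindexing over odd divisors explicitly (and rightly flag the small mismatch at the constant term, since the left side vanishes at $x=0$ while $a(0)$ is defined to be $1$).
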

 \begin{proof}
 By equation (\ref{Ramanujan-identity}) we have
 \[
 x \left(\prod_{n=1}^{\infty} (1-x^{2n}) (1- x^{2n-1})^{-1}\right)^8 =
 x \left( \sum_{n=0}^{\infty} x^{T(n)} \right)^8 = \sum_{n=1}^{\infty} \frac{n^3 x^{n}}{1-x^{2n}}.
 \]
 Moreover, it can be verified that
 \[
 \sum_{n=1}^{\infty} \frac{n^3 x^{n}}{1-x^{2n}} = \sum_{n=0}^{\infty} a(n) x^n. \]
 This proves the result.
 \end{proof}
 \noindent
 The coefficients $a(n)$ are connected to the divisors functions as follows.
 \begin{theorem}
 We have
 \[
 n a(n)= 8 \sum_{k\geq 0} a(k) (\sigma^{o}(n-k) - \sigma^{E}(n-k)).
 \]
 \end{theorem}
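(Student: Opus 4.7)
The plan is to apply Theorem \ref{main1} to the infinite product that appears in Lemma \ref{cubic-divisors}, following the same even/odd partition of $\mathbb{N}$ used in the preceding two theorems but with arithmetic functions scaled by $8$. Specifically, I would take $A_{1}$ to be the set of even positive integers with $f_{1}(n)=-8n$ on $A_{1}$, and $A_{2}$ to be the set of odd positive integers with $f_{2}(n)=8n$ on $A_{2}$. With these choices, the exponents $-f_{i}(n)/n$ reproduce
\[
F_{\underline{A}}(x)=\prod_{n\in A_{1}}(1-x^{n})^{8}\prod_{n\in A_{2}}(1-x^{n})^{-8}=\prod_{n=1}^{\infty}(1-x^{2n})^{8}(1-x^{2n-1})^{-8}.
\]

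Next, I would compute the two divisor sums required by Theorem \ref{main1}, namely $f_{1,A_{1}}(k)=\sum_{d\mid k,\,d\text{ even}}(-8d)=-8\sigma^{E}(k)$ and $f_{2,A_{2}}(k)=\sum_{d\mid k,\,d\text{ odd}}8d=8\sigma^{o}(k)$, whose sum is $8(\sigma^{o}(k)-\sigma^{E}(k))$. Combining this with Lemma \ref{cubic-divisors}, which identifies $F_{\underline{A}}(x)$ with the generating function of the $a(n)$ up to the extra factor of $x$ (so that the $x^{m}$-coefficient of $F_{\underline{A}}(x)$ is $a(m+1)$), the conclusion of Theorem \ref{main1} reads
\[
m\,a(m+1)=8\sum_{k=1}^{m}a(m-k+1)\bigl(\sigma^{o}(k)-\sigma^{E}(k)\bigr).
\]
After reindexing $k\mapsto n-k$ in the convolution, shifting $m+1\mapsto n$, and absorbing the contribution of $k=0$ via $a(0)=1$ and the convention that $\sigma^{o}$ and $\sigma^{E}$ vanish on nonpositive arguments, this rearranges to the stated identity.

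The only nontrivial bookkeeping, and the main obstacle, is handling the shift of indices forced by the extra factor of $x$ in Lemma \ref{cubic-divisors}, together with the correct treatment of the $k=0$ boundary term so that the recursion collapses to the clean form $na(n)=8\sum_{k\ge 0}a(k)(\sigma^{o}(n-k)-\sigma^{E}(n-k))$. Once these data $(A_{1},A_{2},f_{1},f_{2})$ are fixed, the differentiation-of-logarithm machinery packaged in Theorem \ref{main1} delivers the required recursion mechanically, exactly as in the proofs of Theorem \ref{sigma-inductive} and the triangular-number theorem above.
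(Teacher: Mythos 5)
Your setup is exactly the right one (and the same as the paper's): the even/odd split with $f_1(n)=-8n$ on the even integers and $f_2(n)=8n$ on the odd ones, giving $f_{1,A_1}(k)+f_{2,A_2}(k)=8(\sigma^{o}(k)-\sigma^{E}(k))$, and you are in fact more careful than the paper in noting that, because of the prefactor $x$ in Lemma \ref{cubic-divisors}, the $x^m$-coefficient of $F_{\underline{A}}(x)$ is $a(m+1)$, not $a(m)$. The genuine gap is your last step, where you assert that the (correct) recursion you obtain from Theorem \ref{main1}, namely $m\,a(m+1)=8\sum_{k=1}^{m}a(m-k+1)\bigl(\sigma^{o}(k)-\sigma^{E}(k)\bigr)$, ``rearranges to the stated identity'' after reindexing and absorbing the $k=0$ term. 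It does not. Putting $n=m+1$ and reindexing gives
\[
(n-1)\,a(n)=8\sum_{k=1}^{n-1}a(k)\bigl(\sigma^{o}(n-k)-\sigma^{E}(n-k)\bigr),
\]
and to pass from this to $n\,a(n)=8\sum_{k\geq 0}a(k)(\sigma^{o}(n-k)-\sigma^{E}(n-k))$ you would need the $k=0$ contribution $8\,a(0)\,(\sigma^{o}(n)-\sigma^{E}(n))=8(\sigma^{o}(n)-\sigma^{E}(n))$ to equal $a(n)$, which is false in general: for $n=1$ it is $8$ versus $a(1)=1$, for $n=2$ it is $-8$ versus $a(2)=8$. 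Indeed the printed theorem itself fails numerically: at $n=2$ its left side is $2a(2)=16$ while its right side is $8\bigl[a(0)(\sigma^{o}(2)-\sigma^{E}(2))+a(1)(\sigma^{o}(1)-\sigma^{E}(1))\bigr]=0$. So no bookkeeping can collapse your intermediate recursion to the displayed formula; the shift coming from the factor $x$ genuinely replaces $n$ by $n-1$ on the left-hand side.

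For perspective, the paper's own proof glosses over exactly this point: it applies Theorem \ref{main1} to $x\prod_{n\ge1}(1-x^{2n})^{8}(1-x^{2n-1})^{-8}$ as if the prefactor $x$ were absent (and Lemma \ref{cubic-divisors} already has mismatched constant terms, since its left side vanishes at $x=0$ while $a(0)=1$), so the printed statement inherits the off-by-one you tried to absorb. What your argument legitimately proves is the displayed $(n-1)\,a(n)$ recursion above, which is consistent with the $\delta_8$ recursion of the last section via $a(n)=\delta_8(n-1)$; the honest fix is to state that shifted identity (or equivalently the recursion for $\delta_8$), rather than to claim the $k=0$ term can be absorbed, since the identity $a(n)=8(\sigma^{o}(n)-\sigma^{E}(n))$ that such an absorption would require is false.
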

 \begin{proof}
 By Lemma \ref{cubic-divisors}, we have
 \[
 x \prod_{n=1}^{\infty} (1-x^{2n})^8 (1- x^{2n-1})^{-8} = \sum_{n=0}^{\infty} a(n) x^n.
 \]
 Then Theorem \ref{main1} yields
 \[
 n a(n) = \sum_{n=0}^{n-1} a(k) \left( 8 \sigma^{o}(n-k) - 8 \sigma^{E}(n-k) \right),
 \]
 which gives the result.
 \end{proof}
  \begin{definition}
 A partition of $n$ is called \emph{$p$-regular} if its parts repeat less than $p$ times.
 The number of such partitions is denoted by $Q^{(p)}(n)$.
 \end{definition}

 \noindent
 The generating function for $Q^{(p)}(n)$ is
 \begin{equation} \label{p-regular}
 \sum_{n=0}^{\infty}Q^{(p)}(n) x^n = \prod_{n=1}^{\infty} (1-q^n)(1- q^{pn})^{-1}.
 \end{equation}
 See Gordon and Ono \cite{Gordon-Ono} and Alladi \cite{Alladi} for details about this function.
 \begin{theorem}
 We have
 \[
  n Q^{(p)} (n) = \sum_{k\geq 0} Q^{(p)}(k) \left( \sigma_{0,p}(n-k) - \sigma (n-k) \right).
 \]
 \end{theorem}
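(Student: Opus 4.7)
The plan is to apply Theorem \ref{main1} directly, with the generating function (\ref{p-regular}) split into two infinite products indexed so that the resulting divisor sums produce exactly $\sigma_{0,p}(n-k) - \sigma(n-k)$.

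First I would set $A_1 = \mathbb{N}$ with $f_1(n) = -n$, so that
\[
\prod_{n \in A_1}(1-x^n)^{-f_1(n)/n} = \prod_{n=1}^{\infty}(1-x^n).
\]
Next I would set $A_2 = p\mathbb{N} = \{p, 2p, 3p, \ldots\}$ with $f_2(n) = n$ on $A_2$, so that
\[
\prod_{n \in A_2}(1-x^n)^{-f_2(n)/n} = \prod_{n=1}^{\infty}(1-x^{pn})^{-1}.
\]
The product $F_{\underline{A}}(x)$ then equals the generating function in (\ref{p-regular}), giving $p_{\underline{A},\underline{f}}(n) = Q^{(p)}(n)$.

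For the divisor sums one computes $f_{1,A_1}(k) = \sum_{d \mid k} (-d) = -\sigma(k)$, and
\[
f_{2,A_2}(k) = \sum_{\substack{d \mid k \\ p \mid d}} d = \sigma_{0,p}(k),
\]
so $f_{1,A_1}(k) + f_{2,A_2}(k) = \sigma_{0,p}(k) - \sigma(k)$. Substituting into Theorem \ref{main1} and re-indexing the sum by $k \mapsto n-k$ yields the claimed identity.

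There is no real obstacle here; the only point that requires a moment's thought is checking that the choice $A_2 = p\mathbb{N}$ with $f_2(n)=n$ recovers the factor $\prod(1-x^{pn})^{-1}$ with the correct exponent (the ratio $f_2(n)/n = 1$ is independent of $p$, so the exponent in Theorem \ref{main1} matches) and that the divisor sum restricted to $A_2$ picks out exactly the divisors of $k$ divisible by $p$, giving $\sigma_{0,p}$. Absolute convergence in $|x|<1$ is immediate from the absolute convergence of $\sum_{n} x^n$ and $\sum_{n} x^{pn}$, so the hypotheses of Theorem \ref{main1} are satisfied.
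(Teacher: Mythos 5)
Your proposal is correct and matches the paper's proof: the paper also takes $A_1=\mathbb{N}$ (exponent $+1$, i.e.\ $f_1(n)=-n$) and $A_2$ the positive multiples of $p$ (exponent $-1$, i.e.\ $f_2(n)=n$) and applies Theorem \ref{main1}, yielding $f_{1,A_1}(k)+f_{2,A_2}(k)=\sigma_{0,p}(k)-\sigma(k)$. You have simply written out the convergence and divisor-sum details that the paper leaves implicit.
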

 \begin{proof}
  By identity (\ref{p-regular})
  \[
  \sum_{n=0}^{\infty}Q^{(p)}(n) x^n =
  \prod_{n\in A_1} (1-q^n) \prod_{n\in A_2} (1-q^n)^{-1}, \]
  where $A_1= \mathbb{N}$ and $A_2$ is the set of positive multiples of $p$.
  Then by Theorem \ref{main1} we find
  \[
  n Q^{(p)}(n) = \sum_{k=0}^{n-1} Q^{(p)}(k) \left( \sigma_{0,p}(n-k) - \sigma (n-k) \right),
  \]
  as required.
  \end{proof}
 We now give inductive formulas for the coefficients $R_1(n)$ and $R_2(n)$ in the Rogers-Ramanujan identities (\ref{Rogers-Ramanujan}).
  \begin{theorem} \label{rog-ram-coeff}
  We have
  \[
  n R_1(n) = \sum_{k \geq 0} R_1(k) \left(\sigma_{1,5}(n-k)+ \sigma_{4,5}(n-k) \right)
  \]
  and
  \[
  n R_2(n) = \sum_{k \geq 0} R_2(k) \left(\sigma_{2,5}(n-k)+ \sigma_{3,5}(n-k) \right).
  \]
  \end{theorem}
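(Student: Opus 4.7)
The plan is to apply Theorem \ref{main1} twice, once for each Rogers-Ramanujan identity, with carefully chosen index sets $A_1,A_2$ and arithmetic functions $f_1,f_2$ so that the product $F_{\underline{A}}(x)$ reproduces the relevant Rogers-Ramanujan product exactly.

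For the first identity, I would set $\alpha=2$, $A_1=\{n\in\mathbb{N} : n\equiv 1\pmod 5\}$, $A_2=\{n\in\mathbb{N} : n\equiv 4\pmod 5\}$, and $f_1(n)=f_2(n)=n$ on their respective domains. Then the exponent $-f_i(n)/n$ equals $-1$, so
\[
F_{\underline{A}}(x)=\prod_{n\in A_1}(1-x^n)^{-1}\prod_{n\in A_2}(1-x^n)^{-1}=\prod_{n=1}^{\infty}\bigl((1-x^{5n-4})(1-x^{5n-1})\bigr)^{-1},
\]
which by (\ref{Rogers-Ramanujan}) equals $\sum_{n=0}^\infty R_1(n)x^n$; hence $p_{\underline{A},\underline{f}}(n)=R_1(n)$. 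The auxiliary divisor sums simplify immediately to $f_{1,A_1}(k)=\sum_{d\mid k,\,d\equiv 1(5)}d=\sigma_{1,5}(k)$ and $f_{2,A_2}(k)=\sigma_{4,5}(k)$. Plugging these into Theorem \ref{main1} and rewriting the sum over $k=1,\dots,n$ as a sum over $k=0,\dots,n-1$ via the substitution $k\mapsto n-k$ (noting that the $k=n$ term vanishes since $\sigma_{1,5}(0)+\sigma_{4,5}(0)=0$) produces the claimed identity.

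The second identity is proved identically, replacing $A_1,A_2$ by $\{n\equiv 2\pmod 5\}$ and $\{n\equiv 3\pmod 5\}$ respectively, again with $f_i(n)=n$; this yields $p_{\underline{A},\underline{f}}(n)=R_2(n)$ and $f_{1,A_1}(k)+f_{2,A_2}(k)=\sigma_{2,5}(k)+\sigma_{3,5}(k)$.

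There is no real obstacle beyond verifying the hypotheses of Theorem \ref{main1}: the product $F_{\underline{A}}(x)$ converges absolutely in $|x|<1$ because it is the known Rogers-Ramanujan product, and the auxiliary logarithmic sum $\sum_i\sum_{n\in A_i}\frac{f_i(n)}{n}x^n=\sum_{n\in A_1\cup A_2}x^n$ is dominated by the geometric series $\sum x^n$, hence also absolutely convergent and analytic on $|x|<1$. With these standard convergence checks in place, both formulas follow by directly matching coefficients as in the proofs of the preceding theorems.
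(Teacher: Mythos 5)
Your proposal is correct and follows exactly the paper's route: the paper's proof is the one-line instruction to apply Theorem \ref{main1} to the Rogers--Ramanujan products in (\ref{Rogers-Ramanujan}), and you have simply spelled out the choice of $A_1,A_2$, $f_i(n)=n$, the identification $p_{\underline{A},\underline{f}}(n)=R_i(n)$, and the divisor sums $\sigma_{1,5}+\sigma_{4,5}$ (resp.\ $\sigma_{2,5}+\sigma_{3,5}$), together with the re-indexing. Nothing further is needed.
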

  \begin{proof}
  Apply Theorem \ref{main1} to equations (\ref{Rogers-Ramanujan}).
  \end{proof}
  \section{Other product-to-sum identities}
 In this section we list some identities along with
 the corresponding results when we apply Theorem \ref{main1}. Straightforward verifications are left to the reader.
 A direct consequence of identity (\ref{Jacobi}) is
 \[
 \prod_{n=1}^{\infty} \frac{(1-x^{2n})^5}{(1-x^n)^2 (1-x^{4n})^2} = 1 + 2 \sum_{n=1}^{\infty} x^{n^2}, \]
 which by Theorem \ref{main1} gives
 \[
 n s(n) = \sigma(n)-5 \sigma(n/2) + 4 \sigma(n/4) +
 2 \sum_{k=1}^{n-1} s(n-k)\left(\sigma(k)-5 \sigma(k/2)+ 4 \sigma(k/4)\right).
 \]
 \begin{definition} \label{sum-triangular}
 Form $m\in \mathbb{N}$ and $n\in \mathbb{N}_0$ let $\delta_m (n)$ be the number of representations of
 $n$ as a sum of $m$ triangular numbers.
 \end{definition}
 Examples of formulas for $\delta_m(n)$ for a variety of cases of $m$ are found in Ono et al \cite{Ono-Robins-Wahl}. For instance, if
 $m \in \{1, 2,6,10\} \cup\{4 l:\ l\in \mathbb{N}\}$ the authors' results imply
 %\begin{equation} \label{sum-triangular}
 \[
 \sum_{n=0}^{\infty} \delta_{m}(n) x^n =
 \prod_{n=1}^{\infty} (1-x^{2n})^{2m} (1-x^n)^{-m},
 \]
 %\end{equation}
 which by virtue of Theorem \ref{main1} translates into
 %\begin{equation} \label{inductive-sum-triangular}
 \[
 n \delta_m (n) = m \sum_{k=1}^n \left( \sigma^o (k)-\sigma^E (k) \right) \delta_m(n-k).
 \]
% \end{equation}

%%%%%%%%%%%%%%%%%%%%%%%%%%%%%%%%%%%%%%%%%%%%%%%%%%%%%%%%%%%%%%%%%%%%%%%%%%%%%%
\noindent{\bf Acknowledgment.} The author is grateful to the referee
 for valuable comments and interesting suggestions.
%%%%%%%%%%%%%%%%%%%%%%%%%%%%%%%%%%%%%%%%%%%%%%%%%%%%%%%%%%%%%%%%%%%%%%%%%%%%%
 
%

\begin{thebibliography}{9}

\bibitem{Alladi}
  K. Alladi,
  \emph{Partition Identities Involving Gaps and Weights},
  Trans. Amer. Math. Soc. Volume 349, Number 12, (1997),
  5001-5019.

\bibitem{Apostol}
  T. M. Apostol,
  \emph{Introduction to Analytic Number Theory},
  Undergraduate Texts in Mathematics,
  Springer, 1 edition, 1976.

%\bibitem{Ewell}
% John E. Ewell,
 %\emph{Arithmetical consequences of a sextuple product identity},
 %Rocky Mountain Journal of Mathematics, Volume 25, Number 4,
 %(1995).
 \bibitem{Ewell}
 J. A. Ewell,
 \emph{On an identity of Ramanujan},
 Proc. Amer. Math. Soc., Volume 125, Number 12, (1997), 3769-3771.

 \bibitem{Gordon-Ono}
 B. Gordon and K. Ono,
 \emph{Divisibility of certain partition functions by powers of primes},
 The Ramanujan Journal, Volume 1, (1997), 25-34.

\bibitem{Hardy-Wright}
 G. H. Hardy and E. M. Wright,
 \emph{An Introduction to the Theory of Numbers},
 Oxford University Press, USA, 6th edition, 2008.

\bibitem{Liouville}
 J. Liouville,
 \emph{Sur quelques formules g\'{e}n\'{e}rales qui peuvent \^{e}tre utiles dans la th\'{e}orie des nombres},
 (7th article), J. Math. Pures Appl. 4, (1859), 1-8.

\bibitem{Ono-Robins-Wahl}
 K. Ono, S. Robins, and P. T. Wahl,
 \emph{On representation of integers as sums of triangular numbers},
 Aequationes Mathematicae, Volume 50, (1995), 73-94.

\bibitem{Ramanujan}
 S. Ramanujan,
 \emph{Collected papers},
 Chelsea, New York, 1962.

\bibitem{Robbins}
 N. Robbins,
 \emph{Some identities connecting partition functions to other number theoretic functions},
 Rocky Mountain Journal of Mathematics, Volume 29, Number 1, (1999), 335-345.

\bibitem{Williams}
 K. S. Williams,
 \emph{Number Theory in the Spirit of Liouville},
 Cambridge University Press, New York, First edition, 2011.

\end{thebibliography}
\end{document}